\documentclass[12pt]{amsart}
\usepackage{amscd}
\usepackage{amsmath}
\usepackage{verbatim}
\usepackage{longtable}
\usepackage{stmaryrd}
\usepackage{mathtools}
\usepackage{yfonts}
\usepackage[hypertex]{hyperref}

\usepackage[all, cmtip,arrow]{xy}

\usepackage{pb-diagram,pb-xy}

\textwidth 16cm
\textheight 22cm
\headheight 0.5cm
\evensidemargin 0.3cm
\oddsidemargin 0.2cm

\usepackage{amssymb}

\numberwithin{equation}{section}

\newtheorem{thm}[equation]{Theorem}
\newtheorem{prop}[equation]{Proposition}

\newtheorem{lem}[equation]{Lemma}
\newtheorem{cor}[equation]{Corollary}

\theoremstyle{definition}
\newtheorem{rem}[equation]{Remark}
\newtheorem{example}[equation]{Example}

\newtheorem{ntt}[equation]{}

\newcommand{\zz}{\mathbb{Z}}
\newcommand{\ta}{\mathbb{T}}

\newcommand{\qq}{\mathbb{Q}}

\newcommand{\laz}{\mathbb{L}}
\newcommand{\pp}{\mathbb{P}}
\newcommand{\ff}{\mathbb{F}}

\newcommand{\la}{\langle}
\newcommand{\ra}{\rangle}

\DeclareMathOperator{\End}{\mathrm{End}}

\DeclareMathOperator{\codim}{\mathrm{codim}}

\DeclareMathOperator{\OGr}{\mathrm{OGr}}

\newcommand{\pr}{\mathrm{pr}}
\newcommand{\res}{\mathrm{res}}

\newcommand{\pt}{\mathrm{pt}}

\DeclareMathOperator{\Spec}{\mathrm{Spec}}
\DeclareMathOperator{\CH}{\mathrm{CH}}

\newcommand{\Mot}{\mathcal{M}}

\newcommand{\cc}{\mathfrak{b}}

\newcommand{\td}{\mathrm{td}}
\newcommand{\itd}{\mathrm{itd}}
\newcommand{\FGL}{\mathrm{FGL}}
\newcommand{\ch}{\mathfrak{c}}

\title{Morava $K$-theory of twisted flag varieties}
\author{Victor Petrov, Nikita Semenov}
\date{}
\keywords{Algebraic cobordism, formal group law, motives, equivariant cohomology, quadrics.}
\subjclass[2010]{19L41, 11E81.}
\thanks{The authors gratefully acknowledge the support of the Sonderforschungsbereich/Transregio 45 ``Periods,
moduli spaces, and arithmetic of algebraic varieties'' (Bonn-Essen-Mainz).
The first author is supported by the Chebyshev Laboratory (Department of Mathematics and Mechanics,
St. Petersburg State University) under RF Government grant 11.G34.31.0026 and by JSC ``Gazprom Neft'',
and is partially supported by RFBR 13-01-00429.
The second author also acknowledges the support of Universit\'e Paris~13.}

\begin{document}
\maketitle

\begin{abstract}
In the present article we prove some results about the Morava $K$-theory. In particular,
we construct an operation from the Morava $K$-theory to the Chow theory analogous to the
second Chern class for Grothendieck's $K^0$-theory. Furthermore,
we investigate ordinary and equivariant oriented cohomology theories in the sense of Levine-Morel of projective quadrics,
and discuss the Rost motives.
\end{abstract}

%\tableofcontents

\section{Introduction}  

The concept of oriented cohomology theories is well-known in {\it algebraic topology}.
Levine and Morel introduced in \cite{LM} a universal oriented cohomology theory
in {\it algebraic geometry} --- the algebraic cobordism. Using it one can define different other
cohomology theories by the change of coefficients.

Namely, like in topology every oriented cohomology theory is equipped with a formal group law. For example,
for the Chow theory this is the additive formal groups law
and for Grothendieck's $K^0$-theory this is the multiplicative formal group law. Further,
taking the Lubin-Tate formal group law one can define the Morava $K$-theories in algebraic geometry.
We remark that in algebraic topology the Morava $K$-theory is a well-established theory (see \cite{Rav}).

Due to works of Levine, Morel, Panin and Smirnov there exist the Riemann-Roch-type theorems
which can be used to investigate different aspects of oriented cohomology theories.
Moreover, recently an equivariant version of algebraic cobordism has appeared and, hence,
one can define arbitrary equivariant oriented theories in algebraic geometry.

The algebraic cobordism theory has found applications
in the theory of linear algebraic groups and quadratic forms mainly due to the works of Vishik
(e.g. in the construction of fields with the $u$-invariant of the form $2^r+1$, $r\ge 3$).
Moreover, the classical theories, like the Chow theory and $K^0$ are known to give strong
applications in the classification of central simple algebras, quadratic forms and algebraic groups (e.g. the index
reduction formulae of Merkurjev, Panin, Wadsworth; diverse results on quadratic forms due to Karpenko and Vishik --- see
e.g. the introduction in \cite{GPS14}).

In the present article we investigate the Morava $K$-theory of algebraic varieties and provide
a tool to compute the motive with respect to an arbitrary oriented cohomology theory of projective quadrics.
We start in Section~\ref{sec2} summarizing some general results related to oriented cohomology theories.
Section~\ref{sec3} is devoted to a construction of a surjective operation from the Morava $K$-theory
to the Chow theory analogous to the second Chern class for $K^0$. This operation turns out to be additive
when we consider it as an operation from the Morava $K$-theory to the Witt group of the Chow ring associated
with the Lubin-Tate formal group law.

In Section~\ref{sec4} we discuss the Morava-Rost motives and in the remaining sections we introduce an
algorithm to compute the motive of a projective quadric with respect to an arbitrary oriented cohomology theory.
This algorithm essentially involves the equivariant theories.

\section{Definitions and notation}\label{sec2}
We follow \cite{Ful}, \cite{LM}, \cite{Pa03}, \cite{Haz}, \cite{Rav}.

\begin{ntt}[Oriented cohomology theories and motives]
Let $F$ be a field of characteristic $0$. We denote by $A^*$ an oriented cohomology theory in the sense of
Levine-Morel over $F$ which satisfies the localization axiom and is generically constant (\cite[Def.~4.4.1, Def.~4.4.6]{LM},
cf. \cite{Pa03}, \cite{Sm06}).
In particular, we consider $A^*=\CH^*$ the Chow ring modulo rational equivalence,
$A^*=K^0[v_1,v_1^{-1}]$ the Grothendieck group of locally free coherent sheaves,
and $A^*=\Omega^*$ the algebraic cobordism.

By \cite[Thm.~1.2.6]{LM} the algebraic cobordism is a universal oriented cohomology theory and there is a
(unique) morphism of theories $\Omega^*\to A^*$ for any oriented cohomology theory $A^*$.

Each oriented cohomology theory $A$ is equipped with a $1$-dimensional commutative formal group law $\FGL_A$. For $\CH^*$
this is the additive formal group law, for $K^0$ the multiplicative formal group law and for $\Omega$ the universal
formal group law. Moreover, these theories are universal for the respective formal group laws.

For a theory $A^*$ we consider the category of $A^*$-motives with coefficients in a commutative ring $R$,
which is defined in the same way as the category of Grothendieck's Chow motives with $\CH^*$
replaced by $A^*\otimes_\zz R$ (see \cite{Ma68}). In the present
article the ring $R$ is $\zz$, $\zz_{(p)}$, or $\ff_p$ for a prime number $p$.
\end{ntt}

\begin{ntt}[Morava-like theories]
For a prime number $p$ and a natural number $n$ we consider the $n$-th Morava $K$-theory $K(n)$ with respect to $p$.
Note that we do not include $p$ in the notation.
We define this theory as the universal oriented cohomology theory for the Lubin-Tate formal group law of height $n$
with the coefficient ring $\zz_{(p)}[v_n,v_n^{-1}]$ (see Section~\ref{ltfgl}).

For a variety $X$ over $F$ one has $$K(n)(X)=\Omega(X)\otimes_\laz\zz_{(p)}[v_n,v_n^{-1}],$$
and $v_n$ is a $\nu_n$-element in the Lazard ring $\mathbb{L}$ (see e.g. \cite[Def.~2.3]{Sem13}). The degree of $v_n$
is negative and equals $-(p^n-1)$.
In particular, $K(n)(\Spec F)=\zz_{(p)}[v_n,v_n^{-1}]$. We remark that usually one considers the Morava
$K$-theory with the coefficient ring $\ff_p[v_n,v_n^{-1}]$.

If $n=1$ and $p=2$, one has $K(1)(X)=K^0(X)[v_1,v_1^{-1}]\otimes\zz_{(2)}$, since the Lubin-Tate formal
group law is isomorphic to the multiplicative formal group law in this case.

Moreover, sometimes we consider the Brown-Peterson cohomology $BP$ with coefficient ring $\zz_{(p)}[v_1,v_2,\ldots]$.
The $n$-th Morava $K$-theory can be obtained from $BP$ by sending all $v_i$ with $i\ne n$ to zero and localizing
at $v_n$. Besides this, we consider the connective Morava $K$-theory $CK(n)$,
where we do not invert $v_n$. The coefficient ring of $CK(n)$ equals, thus, $\zz_{(p)}[v_n]$.
\end{ntt}

\begin{ntt}[Lubin-Tate formal group law]\label{ltfgl}
We construct the formal group law for the $n$-th Morava $K$-theory modulo $p$ following \cite{Haz}.
The logarithm of the formal group law of the Brown-Peterson cohomology equals
$$l(t)=\sum_{i\ge 0}m_it^{p^i},$$
where $m_0=1$ and the remaining variables $m_i$ are related to $v_j$ as follows:
$$m_j=\frac 1p\cdot\big(v_j+\sum_{i=1}^{j-1}m_iv_{j-i}^{p^i}\big).$$
Let $e(t)$ be the compositional inverse of $l(t)$.
The Brown-Peterson formal group law is given by $e(l(x)+l(y))$.

The $n$-th Morava formal group law is obtained from the $BP$ formal group law by sending all $v_j$ with $j\ne n$
to zero. Modulo the ideal $I$ generated by $p, x^{p^n}, y^{p^n}$ the formal group law for the $n$-th Morava
$K$-theory equals
$$\FGL_{K(n)}(x,y)=x+y-v_n\sum_{i=1}^{p-1}\frac 1p\binom pi x^{ip^{n-1}}y^{(p-i)p^{n-1}}\mod I.$$
In the same way, the logarithm of $K(n)$ is obtained from $l(t)$ by sending all $v_j$ with $j\ne n$
to zero.
\end{ntt}

\begin{ntt}[Generalized Witt vectors]
Let $S$ be a commutative ring of characteristics $0$ and
$$l(x)=\sum_{i=1}^\infty a_ix^i\in S\otimes\qq[[x]],\qquad a_1=1$$
the logarithm of a formal group law over $S$. Following \cite[15.3]{Haz} we define an abelian group of
Witt vectors associated with $l(x)$ as follows.

First we define polynomials $w_n(z)$ in $z_1,\ldots,z_n$ as $$w_n(z)=\sum_{d\mid n}a_{n/d}z_d^{n/d}.$$
Next define the polynomials $\Sigma_n(x_1,\ldots,x_n;y_1,\ldots,y_n)$ as
$$w_n(\Sigma_1(x;y),\ldots,\Sigma_n(x;y))=w_n(x)+w_n(y).$$
Then the coefficients of $\Sigma_i$ lie in $S$ (not just in $S\otimes\qq$).

Consider the set $W(S)$ of vectors of infinite length with coordinates in $S$ and define the structure of an
abelian group on $W(S)$ by $$(a_1,a_2,\ldots)+_{W(S)}(b_1,b_2,\ldots)=(\Sigma_1(a;b),\Sigma_2(a;b),\ldots)$$

The group $W(S)$ is the group of generalized Witt vectors. By construction it depends on $l(x)$. Moreover, note that
the addition in $W(S)$ involves the multiplication in the ring $S$. The classical Witt vectors is a particular
case of this general construction, when one starts with the multiplicative formal group law.

\begin{example}\label{witte}
Consider the logarithm $l(x)$ of the Morava $K$-theory mod $p$ and send additionally $v_n$ to $1$.
A straightforward computation shows that $$l(x)=x+\tfrac 1p\cdot x^{p^n}+\mathcal{O}(x^{2p^n-1}),$$
where $\mathcal{O}$ is the Landau symbol.
Then
$$w_1(z_1)=z_1, w_2(z_1,z_2)=z_2,\ldots, w_{p^n-1}(z_1,\ldots,z_{p^n-1})=z_{p^n-1},$$
$$w_{p^n}(z_1,\ldots,z_{p^n})=z_{p^n}+\tfrac 1p\cdot z_1^{p^n},$$
$$\Sigma_1(x;y)=x_1+y_1,\Sigma_2(x;y)=x_2+y_2,\ldots,\Sigma_{p^n-1}(x;y)=x_{p^n-1}+y_{p^n-1},$$
$$\Sigma_{p^n}(x;y)=x_{p^n}+y_{p^n}+\tfrac 1p(x_1^{p^n}+y_1^{p^n}-(x_1+y_1)^{p^n}).$$
\end{example}
\end{ntt}

\begin{ntt}[Riemann-Roch theorem]\label{rro}
Let $A$ be an oriented cohomology theory, which is universal for its formal group law $\FGL_A$.

Consider a morphism $$\varphi\colon\FGL_{\CH_{\qq}}\to\FGL_A$$
of formal group laws, i.e., $\FGL_A(\varphi(x),\varphi(y))=\varphi(x+y)$.
We always choose the power series $\varphi(t)$ in such a way that its coefficient at $t$ equals $1$.
This morphism induces a morphism of oriented cohomology theories
$$\ch_\varphi\colon A^*\to\CH^*\otimes A^*(F)_\qq.$$

Let $E$ be a vector bundle over a smooth variety $X$ and let $\alpha_i$ be its roots.
Then the Todd class is defined as $\td_\varphi(E)=\prod\td_\varphi(\alpha_i)$ and
$\td_\varphi(t)=\dfrac{t}{\varphi(t)}$. The inverse Todd class
of $E$ is defined as $\itd_\varphi(E)=\prod\itd_\varphi(\alpha_i)$ and $\itd_\varphi(t)=\dfrac{\varphi(t)}{t}$.

For the $n$-th Morava $K$-theory the function
$$\varphi\colon\FGL_{\CH}\to\FGL_{K(n)}$$ is obtained from $e(t)$ of Section~\ref{ltfgl} by sending all $v_j$ with $j\ne n$
to zero.

The following proposition and its corollary is the Riemann-Roch theorem for general oriented cohomology
theory developed by Levine, Morel, Panin and Smirnov in \cite{Sm06}, \cite[Thm.~2.5.3, 2.5.4]{Pa03}, \cite[Ch.~4]{LM}.

\begin{prop}
Let $A$ be an oriented cohomology theory, which is universal for its formal group law.
\begin{enumerate}
\item Let $i\colon Y\to X$ be a closed embedding of smooth varieties with the normal bundle $N$ over $Y$ and $\alpha\in A(Y)$.
Then $$i_*(\ch_\varphi(\alpha)\cdot\itd_\varphi(N))=\ch_\varphi(i_*(\alpha)).$$
\item Let $f\colon Y\to X$ be a morphism of smooth projective varieties and $\alpha\in A(Y)$. Then
$$f_*(\ch_\varphi(\alpha)\cdot\td_\varphi(T_Y))=\ch_\varphi(f_*(\alpha))\cdot\td_\varphi(T_X),$$
where $T_X$ and $T_Y$ denote the tangent bundle over $X$ and $Y$.
\end{enumerate}
\end{prop}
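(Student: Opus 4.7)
Since $A$ is universal for its formal group law $\FGL_A$, the power series $\varphi$ produces a unique change-of-orientation morphism of oriented cohomology theories $\ch_\varphi\colon A^*\to\CH^*\otimes A^*(F)_\qq$: concretely, one transports the additive orientation on $\CH^*\otimes A^*(F)_\qq$ along $\varphi$ to obtain a new orientation whose formal group law is $\FGL_A$, and then universality of $A$ produces the multiplicative morphism $\ch_\varphi$, which on first Chern classes satisfies $\ch_\varphi(c_1^A(L))=\varphi(c_1^\CH(L))$. My plan is to invoke the general Riemann-Roch theorem of Panin-Smirnov and Levine-Morel for such a change-of-orientation morphism and then specialize it to the two geometric situations.

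The general statement (see \cite[Thm.~2.5.4]{Pa03}, \cite[Ch.~4]{LM}, \cite{Sm06}) asserts that for any projective morphism $f\colon Y\to X$ of smooth varieties and any $\alpha\in A(Y)$ one has
$$\ch_\varphi(f_*(\alpha))=f_*\bigl(\ch_\varphi(\alpha)\cdot\td_\varphi(T_f)\bigr),$$
where $T_f\in K_0(Y)$ is the virtual relative tangent complex and the Todd class is extended multiplicatively to virtual bundles via the splitting principle. With this in hand, both parts of the proposition reduce to manipulating $T_f$ in $K_0(Y)$.

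For part (1), $f=i$ is a closed embedding, so $T_i=-N$ in $K_0(Y)$ and by multiplicativity $\td_\varphi(T_i)=\td_\varphi(N)^{-1}$. Since by construction in \ref{rro} one has $\td_\varphi(t)\cdot\itd_\varphi(t)=1$, we obtain $\td_\varphi(T_i)=\itd_\varphi(N)$, yielding (1) directly. For part (2), the virtual tangent bundle is $T_f=T_Y-f^*T_X$, hence $\td_\varphi(T_f)=\td_\varphi(T_Y)\cdot\td_\varphi(f^*T_X)^{-1}$; combining $\td_\varphi(f^*T_X)=f^*\td_\varphi(T_X)$ with the projection formula gives
$$\ch_\varphi(f_*(\alpha))\cdot\td_\varphi(T_X)=f_*\bigl(\ch_\varphi(\alpha)\cdot\td_\varphi(T_Y)\bigr),$$
which is the desired identity. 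Note that $\td_\varphi(T_X)$ is a unit in $\CH^*(X)\otimes A^*(F)_\qq$ because $\varphi$ has leading coefficient $1$, so division by $\td_\varphi(T_X)$ is harmless even though we do not actually need to perform it.

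The main obstacle is the general Riemann-Roch formula itself, which is not re-derived here but borrowed from the cited work. Its hard ingredients are the existence of $\ch_\varphi$ as a morphism of oriented cohomology theories (consequence of universality of $A$), the extension of $\td_\varphi$ to virtual bundles, and the reduction to the case of a closed embedding via deformation to the normal cone combined with the projective bundle formula to treat the smooth projective case; all of this is carried out in \cite{Pa03,LM,Sm06}.
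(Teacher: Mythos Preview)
Your proposal is correct and matches the paper's treatment: the paper does not give an independent proof of this proposition either, but simply attributes it to the general Riemann-Roch theorem of Panin, Smirnov, and Levine--Morel (the sentence preceding the proposition in the paper says exactly this). Your write-up goes a little further than the paper by spelling out how the general formula with the virtual tangent bundle $T_f$ specializes to the two cases via $T_i=-N$ and $T_f=T_Y-f^*T_X$, which is a useful elaboration but not a genuinely different route.
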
                                                        

\begin{cor}\label{cor1}
Let $X$ be a smooth projective variety, $\pi\colon X\to\Spec F$ the
structural morphism, and $\alpha\in A^*(X)$. Then
$$\deg(\ch_\varphi(\alpha)\cdot\td_\varphi(T_X))=\ch_\varphi(\pi_*(\alpha)).$$

For the Chern class $c_1^A(L)\in A^1(X)$ of a line bundle $L$ over $X$ we have
$$\ch_\varphi(c_1^A(L))=\varphi(c_1^{\CH}(L)).$$
\end{cor}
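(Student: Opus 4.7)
The plan is to derive both assertions directly from the two parts of the Riemann--Roch proposition just stated.

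For the first identity, I apply part (2) of the proposition to the structural morphism $\pi\colon X\to\Spec F$. Since $\Spec F$ is a point, its tangent bundle is trivial of rank zero, so $\td_\varphi(T_{\Spec F})=1$. The proposition then reads
$$\pi_*\bigl(\ch_\varphi(\alpha)\cdot\td_\varphi(T_X)\bigr)=\ch_\varphi(\pi_*(\alpha))\cdot 1,$$
and after identifying the pushforward to $\Spec F$ with $\deg$ (and using that $\ch_\varphi$ is the identity on the coefficient ring up to the usual scalar extension) this becomes exactly the claimed equality.

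For the second identity, by naturality of $c_1^A$, $c_1^{\CH}$ and $\ch_\varphi$ with respect to pullback, it suffices to treat a universal situation in which $L$ is cut out by a section with smooth zero scheme (e.g.\ $L=\mathcal{O}(1)$ on $\PP^N$ cut out by a hyperplane, or any very ample reduction via the splitting principle). One may therefore assume that there is a closed embedding $i\colon Y\hookrightarrow X$ of a smooth divisor with normal bundle $N=i^*L$, and with $i_*(1)=c_1^A(L)$ in $A$ as well as $i_*^{\CH}(1)=c_1^{\CH}(L)$ in $\CH$. Applying part (1) of the proposition to $\alpha=1\in A(Y)$ yields
$$\ch_\varphi\bigl(c_1^A(L)\bigr)=\ch_\varphi(i_*(1))=i_*\bigl(\ch_\varphi(1)\cdot\itd_\varphi(i^*L)\bigr)=i_*\bigl(\itd_\varphi(i^*L)\bigr).$$
Writing $\varphi(t)=\sum_{k\ge 1}a_k t^k$ with $a_1=1$, we have $\itd_\varphi(t)=\varphi(t)/t=\sum_{k\ge 0}a_{k+1}t^k$. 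Using $c_1^{\CH}(i^*L)=i^*c_1^{\CH}(L)$ and the projection formula in $\CH$, one computes
$$i_*^{\CH}\bigl(c_1^{\CH}(i^*L)^k\bigr)=c_1^{\CH}(L)^k\cdot i_*^{\CH}(1)=c_1^{\CH}(L)^{k+1},$$
and summing gives $i_*(\itd_\varphi(i^*L))=\sum_{k\ge 0}a_{k+1}c_1^{\CH}(L)^{k+1}=\varphi(c_1^{\CH}(L))$, as required.

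The only non-formal point in this outline is the reduction in the second part to a line bundle realized by a smooth divisor; this is a standard splitting-principle/$\PP^N$-approximation argument and, given the naturality of all constructions involved, does not present a real obstacle. Everything else is bookkeeping with the Riemann--Roch proposition, the projection formula, and the power series $\varphi(t)$.
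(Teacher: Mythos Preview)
Your argument is correct, and the first identity is handled exactly as the paper (implicitly) does: apply part~(2) of the proposition to $\pi$ and use $\td_\varphi(T_{\Spec F})=1$.

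For the second identity your route differs from the paper's. You reduce to the case where $L$ admits a section with smooth zero scheme $i\colon Y\hookrightarrow X$, then apply part~(1) to $i$ and finish with the projection formula. The paper instead uses the zero section $s\colon X\to L$ into the \emph{total space} of $L$; this closed embedding exists for every line bundle, has normal bundle $N\simeq L$, and satisfies $c_1^A(L)=s^*_As_*^A(1_X)$ by definition. Applying part~(1) to $s$ and then pulling back by $s^*_{\CH}$ (using $s_*^{\CH}(1)=[X]=c_1^{\CH}(\mathcal{O}_L(X))$ and the projection formula) gives $\varphi(c_1^{\CH}(L))$ directly, with no reduction step needed. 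The computational core---expanding $\itd_\varphi(t)=\varphi(t)/t$ and collapsing via the projection formula---is the same in both arguments; the paper's choice of embedding simply eliminates the one ``non-formal point'' you flagged. Your reduction is standard but not entirely free (for a general $L$ one must, say, pass to a projective bundle or write $L$ as a difference of very ample bundles and argue via the formal group law), so the zero-section trick is worth knowing as the cleaner alternative.
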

\begin{proof}
We illustrate the method proving the last identity with the first Chern class of $L$.
Let $s\colon X\to L$ be the zero section of the line bundle $L$. Then $c_1^A(L)=s^*_As_*^A(1_X)$,
where $s^*_A$ and $s_*^A$ denote the pullback and the pushforward for the theory $A$.

Then $\ch_\varphi(c_1^A(L))=\ch_\varphi(s^*_As_*^A(1_X))=s^*_{\CH}\ch_\varphi(s^A_*(1_X))=
s^*_{\CH}s_*^{\CH}(\itd_\varphi(N))$,
where $N$ is the normal line bundle to the zero section. By \cite[Ex.~2.5.5]{Ful} this bundle is the restriction
of $\mathcal{O}_L(X)$ to $X$ and $s_*^{\CH}(1)=[X]=c_1^{\CH}(\mathcal{O}_L(X))$.

Write $\varphi(t)=\sum_{i\ge 1}a_it^i$ as a power series. Then
\begin{align*}
\itd_\varphi(N)=\itd_\varphi(c_1^{\CH}(N))=
\sum_{i\ge 1}a_i(c_1^{\CH}(N))^{i-1}=\sum_{i\ge 1}a_i(c_1^{\CH}(s^*\mathcal{O}_L(X)))^{i-1}\\
=\sum_{i\ge 1}a_i(s^*_{\CH}(c_1^{\CH}(\mathcal{O}_L(X))))^{i-1}=
\sum_{i\ge 1}a_is^*_{\CH}((c_1^{\CH}(\mathcal{O}_L(X)))^{i-1}).
\end{align*}

Therefore
\begin{align*}
s^*_{\CH}s_*^{\CH}(\itd_\varphi(N))=s^*_{\CH}s_*^{\CH}(\sum_{i\ge 1}a_is^*_{\CH}((c_1^{\CH}(\mathcal{O}_L(X)))^{i-1}))\\
=s^*_{\CH}(\sum_{i\ge 1}a_is_*^{\CH}(s^*_{\CH}((c_1^{\CH}(\mathcal{O}_L(X)))^{i-1})))=
s^*_{\CH}(\sum_{i\ge 1}a_i(c_1^{\CH}(\mathcal{O}_L(X)))^{i-1}s_*^{\CH}(1))\\
=s^*_{\CH}(\sum_{i\ge 1}a_i[X]^{i-1}s_*^{\CH}(1))=s^*_{\CH}(\sum_{i\ge 1}a_i(s_*^{\CH}(1))^i)=
\sum_{i\ge 1}a_is^*_{\CH}((s_*^{\CH}(1))^i)\\
=\sum_{i\ge 1}a_i(s^*_{\CH}(s_*^{\CH}(1)))^i=\sum_{i\ge 1}a_i(c_1^{\CH}(L))^i=\varphi(c_1^{\CH}(L)),
\end{align*}
where we used the projection formula and the multiplicativity of pullbacks.
\end{proof}
\end{ntt}

\begin{ntt}[Euler characteristic]\label{eulerc}
The Euler characteristic of a smooth projective variety $X$ with respect to an oriented cohomology theory $A^*$ is defined
as $\pi_*^A(1_X)\in A^*(\Spec F)$, where $\pi\colon X\to\Spec F$ is the structural map.

The Euler characteristic can be computed using the Riemann-Roch theorem. E.g., for $A=K^0[v_1,v_1^{-1}]$
the Euler characteristic of $X$ equals $$v_1^{\dim X}\cdot\sum(-1)^i\dim H^i(X,\mathcal{O}_X)$$
see \cite[Ch.~15]{Ful}. If $X$ is geometrically cellular, then this element equals $v_1^{\dim X}$.

For the Morava $K$-theory $K(n)$ and a variety $X$ of dimension $d=p^n-1$
the Euler characteristic modulo $p$ equals the element $v_n\cdot u\cdot s_d$ for some $u\in\ff_p^\times$,
where $s_d$ is the {\it Milnor number} of $X$ (see \cite[Sec.~4.4.4]{LM}, \cite[Sec.~2.2]{Sem13}). If $\dim X$ is not divisible by $p^n-1$, then the Euler characteristic of $X$ equals zero modulo $p$
(see \cite[Prop.~4.4.22]{LM}).
\end{ntt}

\begin{ntt}[Rost nilpotence for oriented cohomology theories]\label{rostnil}
Let $A$ be an oriented cohomology theory and consider the category of $A$-motives over $F$.
For a smooth projective variety $X$ over $F$ we denote by $\Mot(X)$ its motive ($A$-motive).
We say that the Rost nilpotence principle holds for $X$,
if the kernel of the restriction homomorphism $$\End(\Mot(X))\to\End(\Mot(X_E))$$ consists of nilpotent correspondences for all
field extensions $E/F$.

Usually Rost nilpotency is formulated for Chow motives. By \cite[Sec.~8]{CGM05} it holds for
all twisted flag varieties. Note that the proof of \cite{CGM05} works for $A$-motives of twisted flag varieties
for all oriented cohomology theories $A$ satisfying the localization property.
\end{ntt}

\section{Some operations in the Morava $K$-theory}\label{sec3}

A straightforward computation shows the following lemma:
\begin{lem}\label{lem1}
Let $p$ be a prime number and $n$ be an integer.
Consider the power series $\varphi\colon\FGL_{\CH}\to\FGL_{K(n)}$ of Section~\ref{rro}. Then
$$\varphi(t)=t-\tfrac 1p\cdot v_nt^{p^n}+\mathcal{O}(t^{2p^n-1}),$$
where $\mathcal{O}$ is the Landau symbol.
\end{lem}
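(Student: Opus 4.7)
The plan is to compute the compositional inverse of the $n$-th Morava $K$-theory logarithm to the required order. Since $\varphi(t)$ is obtained from $e(t)$ (the compositional inverse of the BP logarithm) by setting $v_j = 0$ for $j \ne n$, and this substitution preserves the identity $l(e(t)) = t$, the series $\varphi(t)$ is the compositional inverse of $l(t) = \sum_{i \ge 0} m_i t^{p^i}$ with $v_j = 0$ for $j \ne n$.

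First I would expand $l(t)$. The recursion $m_j = \tfrac{1}{p}\bigl(v_j + \sum_{i=1}^{j-1} m_i v_{j-i}^{p^i}\bigr)$ combined with the vanishing of all $v_j$ for $j \ne n$ yields, by an easy induction, $m_0 = 1$, $m_j = 0$ for $1 \le j < n$, $m_n = v_n/p$, and $m_j = 0$ for $n < j < 2n$. (In the last range, every potentially nonzero summand would require some $m_i$ with $0 < i < n$, or would be $m_n v_{j-n}^{p^n}$ with $j - n \ne n$, both of which vanish.) Since $p^{2n} > 2p^n - 1$ for $n \ge 1$, this gives $l(t) = t + \tfrac{v_n}{p} t^{p^n} + \mathcal{O}(t^{2p^n})$.

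Next I would invert. Writing $\varphi(t) = t + \sum_{k \ge 2} b_k t^k$ and substituting into $l(\varphi(t)) = t$, matching coefficients of $t^k$ for $2 \le k \le p^n$ forces $b_k = 0$ for $k < p^n$ and $b_{p^n} = -v_n/p$. The main obstacle is to rule out intermediate coefficients in the range $p^n < k < 2p^n - 1$; I would handle this by factoring $\varphi(t) = t(1 + u(t))$ with $u(t) = -\tfrac{v_n}{p} t^{p^n - 1} + (\text{higher})$ and noting that $(1 + u)^{p^n} = 1 + p^n u + \binom{p^n}{2} u^2 + \cdots$ has zero coefficients in degrees $1, \ldots, p^n - 2$, since $p^n u$ begins only in degree $p^n - 1$ and $u^i$ for $i \ge 2$ begins only in degree $2(p^n - 1)$. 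Multiplying by $t^{p^n}$, $\varphi(t)^{p^n}$ has zero coefficients in degrees $p^n + 1, \ldots, 2p^n - 2$, and since the remaining summands of $l(\varphi(t))$ start only at degree $p^{2n}$, the coefficient of $t^k$ in $l(\varphi(t))$ in that range reduces to $b_k$, forcing $b_k = 0$ as required.
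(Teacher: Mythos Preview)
Your proof is correct and carries out exactly the ``straightforward computation'' the paper alludes to without writing out. Note that the first half of your argument (the expansion of $l(t)$) is already recorded in Example~2.4 of the paper; the inversion step you supply is the only missing piece, and your factorization $\varphi(t)=t(1+u(t))$ with $u$ of order $p^n-1$ handles the intermediate range $p^n<k<2p^n-1$ cleanly.
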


Let now $\ch:=\ch_{\varphi}=\ch^{K(n)}\colon K(n)^*\to\CH^*\otimes\qq[v_n,v_n^{-1}]$ be the respective morphism
of theories and $\ch_i$ its codimension $i$ component. We remark that if $n=1$ and $p=2$, then $\ch$ is
the classical Chern character from $K^0$ to $\CH_{\qq}$.

Following notation of Haution \cite{Ha12} define $$\CH_{\zz_{(p)}\subset\qq}[v_n,v_n^{-1}]=
\mathrm{Im}(\CH\otimes\zz_{(p)}[v_n,v_n^{-1}]\to\CH\otimes\qq[v_n,v_n^{-1}]).$$

\begin{thm}
\begin{enumerate}
\item For $i\le p^n-1$ the image of $\ch_i$ lies in $\CH_{\zz_{(p)}\subset\qq}[v_n,v_n^{-1}]$.
\item The image of $\cc:=\ch_{p^n}+\tfrac 1p\cdot\ch_1^{p^n}$ also lies in $\CH_{\zz_{(p)}\subset\qq}[v_n,v_n^{-1}]$.
\item For $i\le p^n-1$ the maps $\ch_i$ are onto $\CH^i_{\zz_{(p)}\subset\qq}[v_n,v_n^{-1}]$
and the map $\cc$ is onto $\CH^{p^n}_{\zz_{(p)}\subset\qq}[v_n,v_n^{-1}]$.
\item The map $(\ch_1,\ldots,\ch_{p^n-1},-\cc,0,\ldots)$ is a group homomorphism
$$K(n)\to W(\CH^{*}_{\zz_{(p)}\subset\qq}[v_n,v_n^{-1}]),$$ where $W$ are the generalized Witt vectors associated with the Lubin-Tate formal group law
of Example~\ref{witte}.
\end{enumerate}
\end{thm}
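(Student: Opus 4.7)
The plan is to reduce all four assertions to the case where the relevant vector bundles split as direct sums of line bundles, via the splitting principle, and then to extract the statements from the explicit expansion $\varphi(t)=t-\tfrac{1}{p}v_nt^{p^n}+\mathcal{O}(t^{2p^n-1})$ of Lemma~\ref{lem1}. For any $X$, pick an iterated flag bundle $\pi\colon X'\to X$ trivializing the vector bundles at hand; by the projective bundle formula, $\pi^\ast\colon\CH^\ast(X)\to\CH^\ast(X')$ is a split injection and remains so after tensoring with $\zz_{(p)}[v_n,v_n^{-1}]$ or $\qq[v_n,v_n^{-1}]$. Therefore the property of a class in $\CH^\ast(X)\otimes\qq[v_n,v_n^{-1}]$ lying in the $\zz_{(p)}$-integral image is detected on $X'$, so all integrality statements in~(1) and~(2) can be checked after pullback.

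For parts~(1) and~(3), on $X'$ any class $\alpha\in K(n)^\ast(X')$ is a $\zz_{(p)}[v_n,v_n^{-1}]$-polynomial in the $\alpha_j:=c_1^{K(n)}(L_j)$, and multiplicativity of $\ch$ together with Corollary~\ref{cor1} gives $\ch(\alpha)=P(\varphi(x_1),\ldots,\varphi(x_m))$ for $x_j:=c_1^\CH(L_j)$. Because $\varphi(t)$ reduces to its leading term $t$ in degrees strictly below $p^n$, the codimension~$i$ component of $\ch(\alpha)$ for $1\le i\le p^n-1$ consists solely of the corresponding monomials in the $x_j$'s with coefficients in $\zz_{(p)}[v_n,v_n^{-1}]$, proving~(1). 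For surjectivity~(3), one uses that the canonical map $\Omega^\ast\to\CH^\ast$ is surjective: any $y\in\CH^i(X)$ lifts to some $\tilde y\in\Omega^i(X)$, whose image $\alpha\in K(n)^i(X)$ satisfies $\ch_i(\alpha)=y$ by the same leading-order analysis, and a parallel argument with codimension $p^n$ gives surjectivity of $\cc$ onto $\CH^{p^n}_{\zz_{(p)}\subset\qq}[v_n,v_n^{-1}]$.

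For parts~(2) and~(4), the codimension~$p^n$ component of $\prod_j\varphi(x_j)^{k_j}$ receives exactly two types of contributions: an integral leading monomial when the total degree $d$ equals $p^n$, and a $\tfrac{1}{p}$-denominator contribution from a single replacement $x_{j_0}\rightsquigarrow-\tfrac{v_n}{p}x_{j_0}^{p^n}$ when $d=1$. The correction $\tfrac{1}{p}\ch_1^{p^n}$ in the definition of $\cc$ is precisely designed to cancel the $\tfrac{1}{p}$-denominators produced by these replacements. The key combinatorial input is Fermat's congruence $\binom{p^n}{k}\equiv 0\pmod p$ for $0<k<p^n$, giving $(\sum_jc_jx_j)^{p^n}\equiv\sum_jc_j^{p^n}x_j^{p^n}\pmod p$. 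I would first verify integrality of $\cc$ on monomials in the $c_1^{K(n)}(L_j)$'s and then extend to arbitrary classes using the identity
\[
\cc(\alpha+\beta)-\cc(\alpha)-\cc(\beta)=\tfrac{1}{p}\bigl(\ch_1(\alpha+\beta)^{p^n}-\ch_1(\alpha)^{p^n}-\ch_1(\beta)^{p^n}\bigr),
\]
whose right-hand side is integral by Fermat. The same identity, combined with the additivity of $\ch_1,\ldots,\ch_{p^n-1}$, yields~(4): with the Witt vector addition $\Sigma_{p^n}(x;y)=x_{p^n}+y_{p^n}+\tfrac{1}{p}(x_1^{p^n}+y_1^{p^n}-(x_1+y_1)^{p^n})$ from Example~\ref{witte}, the map $(\ch_1,\ldots,\ch_{p^n-1},-\cc,0,\ldots)$ is seen to be a group homomorphism by a coordinate-by-coordinate comparison, with the higher coordinates being zero on both sides because the FGL logarithm has no contribution in the relevant intermediate degrees.

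The main obstacle I expect is the careful bookkeeping in part~(2): after expanding $\prod\varphi(x_j)^{k_j}$ into monomials in the $x_j$'s, one must track all $\tfrac{1}{p}$-coefficients appearing in codimension $p^n$ and verify that the correction $\tfrac{1}{p}\ch_1^{p^n}$ cancels them to leave a class in $\CH^{p^n}_{\zz_{(p)}\subset\qq}[v_n,v_n^{-1}]$. Once this cancellation is established, the surjectivity in~(3) and the Witt vector identity in~(4) follow essentially formally.
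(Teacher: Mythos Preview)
Your reduction via the splitting principle has a genuine gap. The splitting principle lets you pull back a \emph{fixed vector bundle} to a flag variety where it filters by line bundles, and the pullback on cohomology is injective. It does \emph{not} say that an arbitrary class in $K(n)^*(X')$ is a polynomial in first Chern classes of line bundles. Your sentence ``on $X'$ any class $\alpha\in K(n)^\ast(X')$ is a $\zz_{(p)}[v_n,v_n^{-1}]$-polynomial in the $\alpha_j:=c_1^{K(n)}(L_j)$'' is unjustified and in general false: $K(n)^*(X)=\Omega^*(X)\otimes_{\laz}\zz_{(p)}[v_n,v_n^{-1}]$ is generated over the coefficient ring by pushforward classes $[\widetilde Z\to X]$, not by products of first Chern classes, and passing to a flag bundle over $X$ does nothing to change this. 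Already for $K^0$ on a variety with nontrivial higher $K$-groups or few line bundles, classes need not be polynomials in $c_1$'s. So parts~(1) and~(2) of your argument do not go through as written.

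The paper proceeds differently and this is exactly the missing idea: it uses the Levine--Morel generation theorem to write every element of $K(n)^*(X)$ as a $K(n)(\pt)$-linear combination of classes $[\widetilde Z\xrightarrow{f} X]$, and then applies Riemann--Roch for the \emph{pushforward} $f_*$ (not just the Chern-class formula of Corollary~\ref{cor1}) to compute
\[
\ch_i\bigl([\widetilde Z\to X]\bigr)=\bigl(\td_\varphi(-T_X)\cdot f_*\td_\varphi(T_{\widetilde Z})\bigr)_{\text{codim }i}.
\]
Because $\td_\varphi(t)=1+\mathcal{O}(t^{p^n-1})$ by Lemma~\ref{lem1}, this gives $\ch_i([\widetilde Z\to X])=[Z]$ or $0$ for $i\le p^n-1$, and shows that the only possibly non-integral contribution to $\ch_{p^n}$ comes from divisors. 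For divisors the paper makes a further reduction you do not have: it uses that $[\widetilde Z\to X]-c_1^{\Omega}(L)$ lies in $\laz_{\ge 1}\cdot\Omega^*(X)$ (kernel of $\Omega\to\CH$), hence is a combination of classes of codimension $\ge 2$, reducing to the line-bundle case where your Corollary~\ref{cor1} computation applies. Your arguments for surjectivity in~(3) and for the Witt-vector additivity in~(4) are essentially the same as the paper's and are fine once~(1) and~(2) are established, but the route to~(1) and~(2) must go through the cobordism generators and pushforward Riemann--Roch, not the splitting principle.
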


\begin{proof}
Let $X$ be a smooth projective variety over $F$.

For each closed subvariety $Z\subset X$ let $\widetilde Z\to Z$ be its resolution
of singularities.
By \cite[Thm.~4.4.7]{LM} $K(n)(X)$ is generated as a $K(n)(\pt)$-module by the classes
$[\widetilde Z\to X]_{K(n)}$.
The morphisms $\ch_i$ are additive and
\begin{align*}
\cc(\alpha+\beta)=\cc(\alpha)+\cc(\beta)+\tfrac 1p((\ch_1(\alpha)+\ch_1(\beta))^{p^n}-\ch_1(\alpha)^{p^n}-\ch_1(\alpha)^{p^n}).
\end{align*}

Moreover, $\ch$ is $\zz_{(p)}[v_n,v_n^{-1}]$-linear.
Therefore it is sufficient to prove the integrality of $\ch_i$ and $\cc$ on generators.
We compute $\ch_i([\widetilde Z\to X]_{K(n)})$ next.

We have by Riemann-Roch
\begin{multline}\label{f1}
\ch_i([\widetilde Z\xrightarrow{f}X]_{K(n)})=
\ch_i(f_*^{K(n)}(1_{\widetilde Z}))\\
=\text{codimension }i\text{ component of }
\td_\varphi(-T_X)\cdot f_*(\td_\varphi(T_{\widetilde Z})).
\end{multline}
Computing $\td_\varphi$ using Lemma~\ref{lem1}, this implies for $i\le p^n-1$ that
$$\ch_i([\widetilde Z\to X])=
\begin{cases}
[Z],&\text{if } \codim Z=i;\\
0,& \text{otherwise.}
\end{cases}
$$
This proves the first three statements of the theorem for $i\le p^n-1$.

Consider now $\ch_{p^n}$. Formula~\eqref{f1} shows that
$\ch_{p^n}([\widetilde Z\to X])=0$ if $\codim Z\ne 1,p^n$ and
$\ch_{p^n}([\widetilde Z\to X])=[Z]$, if $\codim Z=p^n$. This proves the surjectivity
of $\cc$.

To check the integrality of $\cc$ it sufficies to compute it on divisors.

\begin{comment}
We consider first the following case.
Assume that $[\widetilde Z\to X]$ is a closed embedding of codimension $1$.
Then 
\begin{multline}
\ch_{p^n}([\widetilde Z\xrightarrow{f} X])=\ch_{p^n}(f_*^{K(n)}
(1_{\widetilde Z}))\\
=\text{codimension }p^n\text{ component of }
f_*(\itd_\varphi(N)),
\end{multline}
where $N$ is a normal bundle over $\widetilde Z$.
Hence, $$\ch_{p^n}([\widetilde Z\xrightarrow{f} X])=f_*(\tfrac 1p\cdot c_1(N)^{p^n-1}).$$

Let $\mathcal{O}_X(\widetilde Z)$ be the line bundle over $X$ corresponding
to the divisor $\widetilde Z$. By Fulton $c_1(\mathcal{O}_X(\widetilde Z))=[\widetilde Z]$
and the restriction of $\mathcal{O}_X(\widetilde Z)$ to $\widetilde Z$ is the
normal bundle $N$. 
Therefore $$\ch_{p^n}([\widetilde Z\xrightarrow{f} X])=\tfrac 1p\cdot f_*(f^*([\widetilde Z])^{p^n-1})=
\tfrac 1p\cdot[\widetilde Z]^{p^n-1}f_*(1)=\tfrac 1p\cdot[\widetilde Z]^{p^n}.$$

Note that to prove the integrality statement of the theorem,
it is sufficient to consider the connective Morava $K$-theory $CK(n)$.
The canonical projection $\Omega\to CK(n)$ is surjective.
\end{comment}

Let $Z$ be a closed subvariety of $X$ of codimension $1$, $L$ the
respective line bundle over $X$, and $\widetilde Z\to Z$ a resolution of
singularities of $Z$. Denote by $\pr\colon\Omega^*(X)\to\CH^*(X)$ the projection to the Chow theory.
Then $$\pr([\widetilde Z\to X])-\pr(c^\Omega_1(L))=0$$ and, thus, the element
$\alpha:=[\widetilde Z\to X]-c^\Omega_1(L)$ lies in the kernel of $\pr$, which is
equal by \cite[Rem.~4.5.6]{LM} to $\laz_{\ge 1}\Omega(X)$. Since the degree of $\alpha$ equals $1$,
$\alpha$ can be expressed as a linear combination with coefficients in $\laz$
of elements $[\widetilde Y\to X]$, where $\widetilde Y\to Y$ is a resolution
of singularities of a subvariety $Y$ of $X$ of codimenion bigger than $1$.

Projecting $\alpha$ to $K(n)$ reduces the problem of integrality of $\cc$ to its
integrality on classes of codimension bigger than $1$ and to the first Chern classes of line bundles.
But for classes of codimension bigger than $1$ the integrality property of $\cc$ was shown in the first part of the proof.

Therefore it remains to prove our theorem for the first Chern classes of line
bundles. But in this case it follows from Corollary~\ref{cor1}, since for a line bundle $L$ over $X$
$$\ch(c_1^{K(n)}(L))=c_1^{\CH}(L)-\tfrac 1p c_1^{\CH}(L)^{p^n}+\text{higher degree terms}.$$ In particular,
$\ch_{p^n}(c_1^{K(n)}(L))+\tfrac 1p\ch_1^{K(n)}(c_1^{K(n)}(L))^{p^n}=-\tfrac 1p c_1^{\CH}(L)^{p^n}+\tfrac 1p(c_1^{\CH}(L))^{p^n}=0$ is integral.

Finally, it remains to show the additivity of the operation $\mathfrak{C}:=(\ch_1,\ldots,\ch_{p^n-1},-\cc,0,\ldots)$.
But this follows from the explicit formulae of Example~\ref{witte}:
\begin{align*}
&\mathfrak{C}(\alpha)+\mathfrak{C}(\beta)=(\ch_1(\alpha),\ch_2(\alpha),\ldots,-\cc(\alpha),0,\ldots)+_W
(\ch_1(\beta),\ch_2(\beta),\ldots,-\cc(\beta),0,\ldots)\\
&=(\ch_1(\alpha)+\ch_1(\beta),\ch_2(\alpha)+\ch_2(\beta),\ldots,\\
&-\ch_{p^n}(\alpha)-\tfrac 1p\ch_1(\alpha)^{p^n}-\ch_{p^n}(\beta)-\tfrac 1p\ch_1(\beta)^{p^n}+
\tfrac 1p(\ch_1(\alpha)^{p^n}+\ch_1(\beta)^{p^n}-(\ch_1(\alpha)+\ch_1(\beta))^{p^n}),0,\ldots)\\
&=\mathfrak{C}(\alpha+\beta) \text{ for }\alpha,\beta\in K(n)(X).
\end{align*}
\end{proof}        

\begin{cor}
Let $X$ be a smooth projective variety over $F$. Assume that $X$ is geometrically cellular, i.e.,
there exists a field extension $E/F$ such that $X_E$ is a cellular variety,
and assume that the restriction map
$$\mathrm{res}_{K(n)}\colon K(n)(X)\to K(n)(X_E)$$ is an epimorphism. Then the restriction map
$$\mathrm{res}_{\CH^i\otimes\zz_{(p)}}\colon\CH^i(X)\otimes\zz_{(p)}\to\CH^i(X_E)\otimes\zz_{(p)}$$ is an epimorphism
for all $i\le p^n$. 
\end{cor}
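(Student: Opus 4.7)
The plan is to promote the hypothesis on $K(n)$ to Chow groups by running the surjective operations of the theorem on both $X$ and $X_E$ and then extracting an integral Chow lift via a coefficient comparison in $v_n$. Since $X_E$ is cellular, $\CH^i(X_E)$ is torsion-free, so $\CH^i(X_E)\otimes\zz_{(p)}[v_n,v_n^{-1}]$ injects into $\CH^i(X_E)\otimes\qq[v_n,v_n^{-1}]$; this identifies $\CH^i(X_E)_{\zz_{(p)}\subset\qq}[v_n,v_n^{-1}]$ with $\CH^i(X_E)\otimes\zz_{(p)}[v_n,v_n^{-1}]$ and makes every class in $\CH^i(X_E)\otimes\zz_{(p)}$ a legitimate target for the surjectivity statement of part~(3) of the theorem.

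For $i\le p^n-1$, given $\beta\in\CH^i(X_E)\otimes\zz_{(p)}$ I would regard $\beta$ as a $v_n$-constant element of $\CH^i(X_E)\otimes\zz_{(p)}[v_n,v_n^{-1}]$ and pick $\alpha\in K(n)(X_E)$ with $\ch_i(\alpha)=\beta$. The hypothesis that $\mathrm{res}_{K(n)}$ is surjective provides a lift $\tilde\alpha\in K(n)(X)$. Part~(1) of the theorem places $\ch_i(\tilde\alpha)$ in $\CH^i(X)_{\zz_{(p)}\subset\qq}[v_n,v_n^{-1}]$, so it is the image of some $\tilde\gamma\in\CH^i(X)\otimes\zz_{(p)}[v_n,v_n^{-1}]$, which I expand as $\tilde\gamma=\sum_k\gamma_k v_n^k$ with $\gamma_k\in\CH^i(X)\otimes\zz_{(p)}$. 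Since restriction is $\zz_{(p)}[v_n,v_n^{-1}]$-linear and commutes with $\ch_i$, the Laurent polynomial $\sum_k(\gamma_k|_{X_E})v_n^k$ and $\beta$ have the same image in $\CH^i(X_E)\otimes\qq[v_n,v_n^{-1}]$; by torsion-freeness they coincide already in $\CH^i(X_E)\otimes\zz_{(p)}[v_n,v_n^{-1}]$, and matching the $v_n^0$-coefficients gives $\gamma_0|_{X_E}=\beta$ with $\gamma_0\in\CH^i(X)\otimes\zz_{(p)}$, as required.

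The case $i=p^n$ follows the same template with $\cc=\ch_{p^n}+\tfrac1p\ch_1^{p^n}$ in place of $\ch_i$: it is a polynomial in the $\ch_j$'s, hence restriction-compatible; its image over $X$ lies in $\CH^{p^n}(X)_{\zz_{(p)}\subset\qq}[v_n,v_n^{-1}]$ by part~(2); and it is surjective onto the corresponding target over $X_E$ by part~(3). The only real technical point is the $v_n$-bookkeeping, namely ensuring that a preimage $\tilde\gamma$ can be written with integral coefficients in each $v_n$-power and that this grading is preserved by restriction; both are automatic once $\CH^i(X_E)$ is known to be torsion-free, so no genuine obstacle arises beyond correctly assembling the pieces provided by the theorem.
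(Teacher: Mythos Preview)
Your argument is correct and is essentially the paper's diagram chase written out element-wise: both use that cellularity of $X_E$ identifies $\CH^i_{\zz_{(p)}\subset\qq}(X_E)$ with $\CH^i(X_E)\otimes\zz_{(p)}$, then compose the surjectivity of $\res_{K(n)}$ with the surjective operations $\ch_i$ (resp.\ $\cc$) and the defining surjection $\CH^i\otimes\zz_{(p)}\twoheadrightarrow\CH^i_{\zz_{(p)}\subset\qq}$. The only difference is that you spell out the $v_n$-bookkeeping (extracting the $v_n^0$-coefficient from $\tilde\gamma$), which the paper leaves implicit in its commutative diagram.
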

\begin{proof}
Since $X_E$ is cellular, $\CH_{\zz_{(p)}\subset\qq}(X_E)=\CH(X_E)\otimes\zz_{(p)}$. Now the surjectivity
follows from the following commutative diagram:
\[
 \xymatrix{
 & K(n)(X)\ar@{->>}[r]^{\res} \ar@{->>}[d]  &  K(n)(X_E)\ar@{->>}[d]    \\
 \CH^i(X)\otimes\zz_{(p)}\ar@{->>}[r]\ar@/_1.5pc/[rr]_{\res} &\CH^i_{\zz_{(p)}\subset\qq}(X)\ar@{->}[r] & \CH^i(X_E)\otimes\zz_{(p)}
 }
 \]
where the vertical arrows are the surjective operations $\ch_i$, $i\le p^{n}-1$, or $\cc$,
which by the construction commute with restriction maps.
\end{proof}

\begin{rem}
The operation $\cc$ can be considered as a higher analog of Chern classes. Indeed, for $K(1)$ with $p=2$
it coincides with $c_1^2-c_2$, where $c_i$ are the usual Chern classes (from $K^0$ to $\CH$).

In \cite[Thm.~6.2]{Vi12} Vishik classifies all additive operations $A\to B$ between two oriented cohomology theries, where
$A$ is universal for the respective formal group law. Nevertheless, the Witt vectors do not form
an oriented cohomology theory (cf. \cite[\S2, Thm.~1(2)]{PSh06}) and, thus, operations $\cc$ do not fit directly into
Vishik's context.
\end{rem}

\section{Rost motives}\label{sec4}

Starting from this section we will develop some methods to compute the Morava $K$-theory.

Let $R_m$ denote the (generalized) Rost motive of a non-zero pure symbol $\alpha\in H^m(F,\mu_p^{\otimes m})$ in the category of Chow motives
with $\zz_{(p)}$-coefficients. By definition $R_m$ is indecomposable and for all field extensions $K/F$ the following
conditions are equivalent:
\begin{enumerate}
\item $(R_m)_K$ is decomposable;
\item $(R_m)_K\simeq\bigoplus_{i=0}^{p-1}\zz_{(p)}(b\cdot i)$ with $b=\frac{p^{m-1}-1}{p-1}$;
\item $\alpha_K=0\in H^m(K,\mu_p^{\otimes m})$.
\end{enumerate}
The fields $K$ from this definition are called splitting fields of $R_m$.

The Rost motives were constructed by Rost and Voevodsky (see \cite{Ro06}, \cite{Vo11}). Namely, for all pure symbols $\alpha$
there exists a smooth projective $\nu_{m-1}$-variety $X$ (depending on $\alpha$) over $F$ such that the Chow motive
of $X$ has a direct summand isomorphic to $R_m$. The variety $X$ is called a {\it norm variety} of $\alpha$.

E.g., if $p=2$ and $\alpha=(a_1)\cup\ldots\cup(a_m)$ with $a_i\in F^\times$, then one can take for $X$
the projective quadric given by the equation $\langle\!\langle a_1,\ldots,a_{m-1}\rangle\!\rangle\perp\langle -a_m\rangle=0$,
where $\langle\!\langle a_1,\ldots,a_{m-1}\rangle\!\rangle$ denotes the Pfister form.

By \cite[Sec.~2]{ViYa07} there is a unique lift of the
Rost motive $R_m$ to the category of $\Omega$-motives and, since $\Omega$ is the universal oriented cohomology theory,
there is a well-defined Rost motive in the category of $A^*$-motives for any oriented cohomology theory $A^*$.
We will denote this $A$-motive by the same letter $R_m$. By $\ta(l)$, $l\ge 0$, we denote the Tate motives in the
category of $A$-motives. If $A=\CH\otimes\zz_{(p)}$, we keep the usual notation $\ta(l)=\zz_{(p)}(l)$.

\begin{prop}
Let $p$ be a prime number, $n$ and $m$ be natural numbers and $b=\frac{p^{m-1}-1}{p-1}$.
For a non-zero pure symbol $\alpha\in H^m(F,\mu_p^{\otimes m})$ consider the respective Rost motive $R_m$. Then
\begin{enumerate}
\item If $n<m-1$, then the $K(n)$-motive $R_m$ is a sum of $p$ Tate motives
$\oplus_{i=0}^{p-1}\ta(b\cdot i)$.
\item If $n=m-1$, then the $K(n)$-motive $R_m$ is a sum of the Tate motive $\ta$
and an indecomposable motive $L$ such that
$$K(n)(L)\simeq(\zz^{\oplus(p-1)}\oplus(\zz/p)^{\oplus{(m-2)(p-1)}})\otimes\zz_{(p)}[v_n,v_n^{-1}].$$
For a field extension $K/F$ the motive $L_K$ is isomorphic to a direct sum of twisted Tate motives
iff it is decomposable and iff the symbol $\alpha_K=0$.
\item If $n>m-1$, then the $K(n)$-motive $R_m$ is indecomposable and its realization equals the group
$\CH(R_m)\otimes\zz_{(p)}[v_n,v_n^{-1}]$.
For a field extension $K/F$ the motive $(R_m)_K$ is decomposable iff $\alpha_K=0$.
In this case $(R_m)_K$ is a sum of $p$ Tate motives.
\end{enumerate}
\end{prop}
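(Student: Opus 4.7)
The overall plan is to use the unique lift of $R_m$ to an $\Omega$-motive from \cite{ViYa07}, apply the change of coefficients along $\laz\to\zz_{(p)}[v_n,v_n^{-1}]$, and compare the resulting $K(n)$-motive with its restriction to a splitting field $K/F$ via the Rost nilpotence principle of~\ref{rostnil}. Over $K$ the motive $(R_m)_K$ always splits as $\bigoplus_{i=0}^{p-1}\ta(bi)$ by base change from the Chow-motivic decomposition, so the task is to determine which of these Tate summands descend to $F$. The quantitative input is Lemma~\ref{lem1}: the series $\varphi(t)$ differs from $t$ only in degrees $\geq p^n$, so the Riemann-Roch corrections affect $R_m$ precisely when $p^n\leq\dim R_m=p^{m-1}-1$, i.e.\ when $n\leq m-1$.

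For Case~(3), $n>m-1$, the Todd series $\td_\varphi$ restricted to $\CH^*(R_m)$ is the constant~$1$, so the Riemann-Roch morphism $\ch_\varphi$ identifies $K(n)(R_m)$ with $\CH(R_m)\otimes\zz_{(p)}[v_n,v_n^{-1}]$, giving the realization formula. For indecomposability I would take an idempotent $e\in\End_{K(n)}(R_m)$, restrict it to $K$, and note that $\End_{K(n)}((R_m)_K)=\prod_{i=0}^{p-1}\zz_{(p)}[v_n,v_n^{-1}]$ since the off-diagonal groups $K(n)^{b(j-i)}(\pt)$ vanish when $0<|b(j-i)|<p^n-1$. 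Rost nilpotence reduces lifting of idempotents to lifting modulo nilpotents, and combined with the indecomposability of the Chow motive $R_m$ over $F$, this forces $e\in\{0,1\}$.

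For Case~(2), $n=m-1$, the realization $K(n)(R_m)=\CH(R_m)\otimes\zz_{(p)}[v_n,v_n^{-1}]$ still follows by the same Riemann-Roch argument, since the correction at degree $p^n=p^{m-1}$ reaches only the top of $R_m$ and is controlled by the operation $\cc$ of the preceding theorem. One Tate summand $\ta$ splits off via an idempotent built from the unit and the fundamental class, whose existence is certified by the non-vanishing of the $K(n)$-theoretic Euler characteristic of the norm variety $X$: by~\ref{eulerc}, since $\dim X=p^n-1$ and $X$ is a $\nu_n$-variety, one has $\pi^{K(n)}_*(1_X)=v_n\cdot u\cdot s_{p^n-1}$ with $s_{p^n-1}$ a unit mod~$p$. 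The complement $L$ is then shown indecomposable by a Case~(3)-style analysis of the residual endomorphism ring, and its realization is obtained by splitting off $K(n)(\ta)$ from $K(n)(R_m)$. For Case~(1), $n<m-1$, multiple correction degrees lie in range, and the plan is to split off all $p$ Tate summands iteratively using higher analogues of $\cc$ (or Milnor-type operations $Q_i$ with $i\leq n$), reducing to Case~(2) at each step.

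The main obstacle will be Case~(2), and specifically verifying that the realization of $L$ takes the precise form $(\zz^{\oplus(p-1)}\oplus(\zz/p)^{\oplus(m-2)(p-1)})\otimes\zz_{(p)}[v_n,v_n^{-1}]$. The free part drops in rank from $p$ to $p-1$ after removing $\ta$, but the $\zz/p$-torsion part requires a careful analysis of how the torsion in $\CH(R_m)$ interacts with the Lubin-Tate formal group law at the critical degree $p^{m-1}$; this is precisely where the operations $\cc$ and $\ch_i$ of the preceding theorem, combined with Rost-nilpotence descent, should play the decisive role.
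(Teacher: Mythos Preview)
Your plan diverges from the paper's at the decisive point. The paper does \emph{not} argue via Riemann--Roch, Lemma~\ref{lem1}, or the operations $\ch_i,\cc$ of Section~\ref{sec3}. Its single structural input is the $BP$-computation of Yagita and Vishik--Yagita (\cite[Prop.~11.11]{Ya12}, \cite{ViYa07}): the restriction $BP(R_m)\hookrightarrow BP(\overline R_m)=BP(\pt)^{\oplus p}$ is injective with image $BP(\pt)\oplus I(p,m-2)^{\oplus(p-1)}$, where $I(p,m-2)=(p,v_1,\dots,v_{m-2})$. All three cases are then read off by tensoring with $K(n)(\pt)$ over $BP(\pt)$. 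In Case~(1) the argument is one line: for $n\le m-2$ one has $v_n\in I(p,m-2)$, so inverting $v_n$ makes the restriction an isomorphism, and by the K\"unneth formula~\eqref{formom} every projector over the splitting field is rational. Your ``iterative splitting using higher analogues of~$\cc$'' is not a substitute for this; no such operations are available in the paper, and it is unclear how to produce the required $p$ rational idempotents without the $BP$ input. In Case~(2) the realization of $L$ is obtained directly from $I(p,m-2)\otimes_{BP(\pt)}K(n)(\pt)$ (a short Koszul computation yields the $(\zz/p)^{m-2}$-torsion per summand), and indecomposability comes from a codimension count: in $K(n)^{p^n-1}(R_m\times R_m)$ the constraint $-s(p^n-1)+\codim Y=p^n-1$ forces $\codim Y\in\{0,p^n-1,2(p^n-1)\}$, whence at most three rational projectors exist.

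Your Riemann--Roch route has a concrete defect, not just a missing detail: the assertion that ``$\ch_\varphi$ identifies $K(n)(R_m)$ with $\CH(R_m)\otimes\zz_{(p)}[v_n,v_n^{-1}]$'' cannot be correct as stated, because $\ch_\varphi$ lands in $\CH\otimes\qq[v_n^{\pm1}]$ and therefore kills all $p$-torsion, while both $K(n)(R_m)$ and $\CH(R_m)$ carry nontrivial $(\zz/p)$-summands for $m\ge3$. That the two groups happen to be abstractly isomorphic is a consequence of the $BP$ formula, not of the Chern character. Likewise, in Case~(3) your appeal to ``indecomposability of the Chow motive $R_m$'' is not justified: there is no natural map $K(n)\to\CH$ through which an idempotent could be transported, and idempotents do not lift along $\Omega\twoheadrightarrow K(n)$. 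What does survive from your outline is the Euler-characteristic construction of the projector onto~$\ta$ in Case~(2), which matches the paper exactly, and your observation that for $n>m-1$ the off-diagonal groups $K(n)^{b(j-i)}(\pt)$ vanish, which is a valid way to see that $\End_{K(n)}((R_m)_K)\cong\zz_{(p)}^{\,p}$; but to finish you still need the $BP$-injectivity (or the paper's codimension count) to show that only $0$ and $1$ are rational.
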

\begin{proof}
Denote by $\overline R_m$ the scalar extension of $R_m$ to its splitting field.
By \cite[Prop.~11.11]{Ya12} (cf. \cite[Thm.~3.5, Prop.~4.4]{ViYa07}) the restriction map for the $BP$-theory
\begin{equation}\label{eq1}
\res\colon BP(R_m)\to BP(\overline R_m)=BP(\Spec F)^{\oplus p}
\end{equation}
is injective, and the image equals 
\begin{equation}\label{eq2}
BP(R_m)\simeq BP(\Spec F)\oplus I(p,m-2)^{\oplus (p-1)},
\end{equation}
where $I(p,m-2)$ is the ideal in the ring $BP(\Spec F)=\zz_{(p)}[v_1,v_2,\ldots]$ generated by the elements
$\{p,v_1,\ldots, v_{m-2}\}$.

(1) Assume first that $n<m-1$.
Since the ideal $I(p,m-2)$ contains $v_n$ for $n<m-1$ and $v_n$ is invertible in $K(n)(\Spec F)$, we immediately get that all
elements in $K(n)(\overline R_m)$ are rational, i.e., are defined over the base field. By the properties of the
Rost motives
\begin{equation}\label{formom}
\Omega^l(R_m\times R_m)=\bigoplus_{i+j=l}\Omega^i(R_m)\otimes\Omega^j(\overline R_m)
\end{equation}
for all $l$. Since $\Omega$ is a universal theory, the same formula holds for $BP$ and for $K(n)$.
Therefore all elements in $K(n)(\overline R_m\times \overline R_m)$ are rational, and this gives the
first statement of the proposition.

(2) Assume now that $n=m-1$.
Let $X$ be a norm variety for the symbol $\alpha$. In particular, $\dim X=p^{m-1}-1=p^n-1$.
Since the Morava-Euler characteristic of $X$ equals $u\cdot v_n$ for some
$u\in\zz_{(p)}^\times$ (see Section~\ref{eulerc}), the element $v_n^{-1}\cdot u^{-1}(1\times 1)\in K(n)(X\times X)$
is a projector defining the Tate motive $\ta$. Thus, we get the decomposition $R_m\simeq\ta\oplus L$ for some motive $L$.
We claim that $L$ is indecomposable.

Indeed, by \cite[Thm.~4.4.7]{LM} the elements of
$K(n)^{p^n-1}(R_m\times R_m)$ are linear combinations of elements of the form $v_n^s\cdot [Y\to X\times X]$,
where $Y$ is a resolution of singularities of a subvariety of $X\times X$, and $-s(p^n-1)+\codim Y=p^n-1$.
In particular, $s=0,1,-1$ and $\codim Y=0,p^n-1,2(p^n-1)$.
By formula~\eqref{formom} and by the injectivity of the restriction map for $BP$,
it follows that there are at most three rational projectors in $K(n)(\overline R_m\times \overline R_m)$.
These are the diagonal, the projector $v_n^{-1}\cdot u^{-1}(1\times 1)$ constructed above and their difference
(which defines the motive $L$).
Therefore by Rost nilpotency (see Section~\ref{rostnil}) the motive $L$ is indecomposable over $F$.

Taking the tensor product $-\otimes_{BP(\Spec F)} K(n)(\Spec F)$ with formula~\eqref{eq1} and
using \eqref{eq2} one immediatelly gets the formula for $K(n)(L)$.

(3) The same arguments show that $R_m$ is indecomposable for the Morava $K$-theory
$K(n)$ for $n>m-1$, cf. \cite[Ex.~6.14]{Nes13}.
\end{proof}

\begin{rem}
This proposition demonstrates a difference between $K^0$ and the Morava $K(n)$-theory, when $n>1$. By \cite{Pa94} $K^0$
of all twisted flag varieties is $\zz$-torsion-free. This is not the case for $K(n)$, $n>1$.

Moreover, the same arguments as in the proof of the proposition
show that the connective $K$-theory $CK(1)$ (see \cite{Cai08}) of Rost motives $R_m$ for $m>2$ contains non-trivial $\zz$-torsion.
\end{rem}

\begin{rem}
The Chow groups of the Rost motives are known; see \cite[Thm.~8.1]{KM02}, \cite[Thm.~RM.10]{KM13}, \cite[Cor.~10.8]{Ya12},
\cite[Section~4.1]{Vi07}.
\end{rem}

\section{Geometric filtration on a product of quadrics}\label{geomfilt}
Starting from this section we will introduce a method to decompose the $A$-motive of a projective quadric
for an arbitrary oriented cohomology theory $A$. We remark that it is not sufficient to assume that $A=\Omega$
is universal. E.g., the $\Omega$-motive of a generic quadric is indecomposable (since due to Vishik
the Chow motive of a generic quadric is indecomposable (see \cite[Thms.~3.1, 4.1]{Ka12}) and by \cite[Sec.~2]{ViYa07} this implies that
the $\Omega$-motive of a generic quadric is indecomposable). On the other hand, the $K^0$-motives
of all quadrics are always decomposable (see \cite{Pa94}).

Let $q$ be a regular quadratic form over a field $F$ and
$Q=\{\la u\ra\mid q(u)=0\}$ be the respective
$n$-dimensional smooth projective quadric.
Define $$X=\Big\{\big(\la u\ra,\la v\ra\big)\in Q\times Q\mid q(u,v)=0\Big\}.$$

There is the following filtration of $Q\times Q$.
$$Q\times Q\supset X\supset Q,$$
where $Q\subset X$ is the diagonal embedding.

The projection to the first component
$$p_1\colon (Q\times Q)\setminus X\to Q$$ is an $\mathbb{A}^n$-fibration.
Let $\OGr(1,2,Q)$ denote the Grassmannian of isotropic flags of subspaces
of dimensions $1$ and $2$. The map
\begin{align*}
X\setminus Q&\to\OGr(1,2,Q)\\
\big(\la u\ra,\la v\ra\big)&\mapsto \big(\la u\ra\le\la u, v\ra\big)
\end{align*}
is an $\mathbb{A}^1$-fibration.

Let $\tau_2$ be the tautological vector bundle over $\OGr(1,2,Q)$
of rank $2$. Consider
$$\OGr(1,2,Q)\xleftarrow{\pi}\pp(\tau_2)\xrightarrow{f} Q\times Q$$
$$\big(\la u\ra\le\la u, v\ra\big)\mapsfrom\big(\la u\ra\le\la u, v\ra,\,\la w\ra
\le\la u, v\ra\big)
\mapsto \big(\la u\ra,\la w\ra\big)$$
Note $f(\pp(\tau_2))=X$ and so $f$ is a resolution of singularities of $X$.

By \cite[Proof of Thm.~4.4]{NeZ06} we have
\begin{equation}\label{neza}
\begin{split}
&A^*(Q\times Q)\xleftarrow\simeq A^*(Q)\oplus A^{*-1}(\OGr(1,2,Q))\oplus A^{*-\dim Q}(Q)\\
&p_1^*(x)+f_*\circ\pi^*(y)+i_*(z)\mapsfrom x\oplus y\oplus z
\end{split}
\end{equation}
where $i\colon Q\to Q\times Q$ is the diagonal embedding.

\section{Equivariant theories and morphisms}\label{seceq}
In this section we give general information about {\it equivariant} oriented cohomology theories
following Brion and Krishna (see \cite{Bri97}, \cite{Kr10} and references there).

Let $T$ be a split torus (i.e. a product of several copies of the multiplicative group $\mathbb{G}_m$)
and $X$ a smooth projective $T$-variety, where $T$
acts on $X$ with a finite number of fixed points.

Let $A_T^*$ be an oriented $T$-equivariant cohomology theory which is obtained from the equivariant
cobordism theory $\Omega_T^*$ by a change of the coefficients.
Then there is an injective ring homomorphism
\begin{align}\label{ine}
A_T^*(X)\xhookrightarrow{}\bigoplus_{x\in X^T}A_T^*(x)
\end{align}
induced by the embedding of these fixed points $X^T\subset X$, see \cite[Cor.~7.2]{Kr10}.

\begin{ntt}[pull-back]
Let $X$ and $Y$ be smooth projective $T$-varieties with finite number
of fixed points and let $g\colon X\to Y$ be a $T$-equivariant morphism
of relative dimension $d$.

Then we have a commutative diagram
$$
\xymatrix{
A^*_T(Y)\ar[r]^-{g^*}\ar@{^{(}->}[d]&A^*_T(X)\ar@{^{(}->}[d]\\
\bigoplus_{y\in Y^T}A_T^*(y)\ar@{-->}[r]^-{\tilde g^*}&\bigoplus_{x\in X^T}A_T^*(x)
}
$$
where $\tilde g^*(a)_x=a_{g(x)}$, $x\in X^T$, $a\in \bigoplus_{y\in Y^T}A_T^*(y)$.
\end{ntt}

\begin{ntt}[push-forward]
For every $x\in X^T$ the torus $T$ acts on the
vector space $T_{X,x}$, where $T_{X,x}$ denotes the tangent bundle of $X$ at the point $x$. We assume that this representation of $T$ has no zero
weights and define $c^T_{top}(T_{X,x})$ as the product of all its weights
with multiplicities. Furthermore, we assume that $c^T_{top}(T_{Y,y})\ne 0$
for all $y\in Y^T$.

Then there is a commutative diagram
$$
\xymatrix{
A^*_T(X)\ar[r]^-{g_*}\ar@{^{(}->}[d]&A^{*-d}_T(Y)\ar@{^{(}->}[d]\\
\bigoplus_{x\in X^T}A_T^*(x)\ar@{-->}[r]^-{\tilde g_*}&\bigoplus_{y\in Y^T}A_T^{*-d}(y)
}
$$
where
$$\tilde g_*(a)_y=\sum_{f(x)=y}a_x\cdot\frac{c^T_{top}(T_{Y,y})}{c^T_{top}(T_{X,x})},$$
$y\in Y^T$, $a\in\bigoplus_{x\in X^T}A_T^*(x)$.

Let now $y\in Y^T$ and consider the Cartesian square
$$\xymatrix{
Z\ar[r]^-{j}\ar[d]^-{\zeta}&X\ar[d]^-{g}\\
y\ar@{^{(}->}[r]&Y
}$$
Assume that the fiber $Z:=g^{-1}(y)$ is reduced and smooth. Then
\begin{align}\label{fff}
\tilde g_*(a)_y=\zeta_*\big(j^*(a)\cdot c^T_{top}\big(\frac{T_{Y,y}}{d_xg(T_{X,x})}\big)\big)
\end{align}
for all $a\in\bigoplus_{x\in X^T}A_T^*(x)$.
\end{ntt}

\section{Equivariant computations}
Now we apply the previous considerations to the geometric filtration on $Q\times Q$ constructed in Section~\ref{geomfilt}.

\begin{ntt}[Description of the method]\label{method}
Let $Q$ be an arbitrary smooth projective quadric over $F$. Our goal is to find
a decomposition of the $A$-motive of $Q$, i.e., to construct projectors in $A(Q\times Q)$.

By Rost nilpotency it sufficies to construct rational projectors in $A(\overline Q\times\overline Q)$,
where $\overline Q$ denotes the extension of scalars to a splitting field of the quadric $Q$.

To construct rational projectors we use formula~\eqref{neza} which is compatible with scalar extensions.
Namely, we start with rational elements in $A(\overline Q)$ and $A(\OGr(1,2,\overline Q))$ (e.g. with Chern classes
of rational bundles), and compute their images in $A(\overline Q\times\overline Q)$ under the map of formula~\eqref{neza}.

To compute these images we use the $T$-equivariant theory $A_T$, the injection~\eqref{ine} and the concrete
formulas for pullbacks and pushforwards for $A_T$ described in Section~\ref{seceq}. In particular, we need to know how
the fixed points on $\overline Q$, $\OGr(1,2,\overline Q)$ and $\mathbb{P}(\tau_2)$ look like and we need a description
of the weights of the tangent bundles at the fixed points. Below we will give this explicit description.

This reduces the computation of projectors in $A(Q\times Q)$ to combinatorics, which can be performed on a computer.
\end{ntt}

Let $(V,q)$ be a $2l$-dimensional {\it split} quadratic space with basis
$$\{e_1,\ldots e_l,e_{-l},\ldots,e_{-1}\}$$ and $q(e_i)=0$,
$q(e_i+e_j)=\delta_{i,-j}$.
Let $Q$ be the corresponding projective split quadric and $G=\mathrm{O}^+(Q)$ be the respective orthogonal group
with a split maximal torus $T$ such that $\la e_i\ra$ is the weight subspace
of $V$ of weight $\chi_i$, i.e., $te_i=\chi_i(t)e_i$ for all $t\in T$,
$i=1,\ldots,l$. The character group of $T$ is $\zz\chi_1\oplus\ldots\oplus\zz\chi_l$.

The torus $T$ acts naturally on
$Q$, $\OGr(1,2,Q)$, and $\mathbb{P}(\tau_2)$ with finite number of fixed points.
The group $G$ acts on these varieties, so the Weyl group $W$ of $G$ acts on the
respective fixed points. If $x$ is a fixed point and the weights of $T_{X,x}$
are $\rho_1,\ldots,\rho_{\dim X}$ (with multiplicities), then the weights
of $T_{X,w(x)}$ are $w(\rho_1),\ldots,w(\rho_{\dim X})$, $w\in W$.
So, it is sufficient to compute the weights for representatives in $W$-orbits
of fixed points. For $X=G/P$, where $P$ is a parabolic subgroup of $G$, the weights at the point $1\cdot P$ are the roots
of the unipotent radical of $P^-$.

We have a commutative diagram
$$\xymatrix{
A_T^*(Q)\oplus A_T^{*-1}(\OGr(1,2,Q))\oplus A_T^{*-\dim Q}(Q)\ar[r]^-{\simeq}\ar@{^{(}->}[d]&A_T^*(Q\times Q)\ar@{^{(}->}[d]\\
\displaystyle\bigoplus_{x\in Q^T}A^*_T(x)\oplus\bigoplus_{y\in\OGr(1,2,Q)^T}A^{*-1}_T(y)
\oplus\bigoplus_{x'\in Q^T}A^{*-\dim Q}_T(x')\ar[r]&\displaystyle\bigoplus_{z\in(Q\times Q)^T}A^*_T(z)
}
$$

\begin{ntt}[Fixed points on a quadric]
The fixed points on $Q$ are the lines $\la w(e_1)\ra$, $w\in W$, i.e., the
lines $\la e_i\ra$, $i=-l,\ldots,-1,1,\ldots l$.

The weights at $\la e_1\ra$ are $\pm \chi_2-\chi_1,\ldots,\pm \chi_l-\chi_1$.
\end{ntt}

\begin{ntt}[Fixed points on $\OGr(1,2,Q)$]
The fixed points on $Q$ are the flags
$$\big(\la w(e_1)\ra\le\la w(e_1),w(e_2)\ra\big),\quad w\in W.$$

The weights at $\big(\la e_1\ra\le\la e_1,e_2\ra\big)$ are
$$\pm \chi_2-\chi_1,\pm \chi_3-\chi_1,\ldots,\pm \chi_l-\chi_1,\pm \chi_3-\chi_2,
\pm \chi_4-\chi_2,\ldots,\pm \chi_l-\chi_2.$$
\end{ntt}

\begin{ntt}[Fixed points on $\mathbb{P}(\tau_2)$]
There are two $W$-orbits of fixed points on $\mathbb{P}(\tau_2)$:
$$\big(\la e_1\ra\le\la e_1,e_2\ra,\la e_1\ra\le\la e_1,e_2\ra\big)\text{ and }
\big(\la e_1\ra\le\la e_1,e_2\ra,\la e_2\ra\le\la e_1,e_2\ra\big).$$

The weights at the first (resp. at the second) of these points are the same as for $\OGr(1,2,Q)$
at $\big(\la e_1\ra\le\la e_1,e_2\ra\big)$ together with the weight of
$\mathbb{P}^1$ at $\la e_1\ra$ (resp. at $\la e_2\ra$), which is $\chi_2-\chi_1$
(resp. $\chi_1-\chi_2$).
\end{ntt}

$$\xymatrix{
A_T^{*-1}(\OGr(1,2,Q))\ar[r]^-{\pi^*}\ar@{^{(}->}[d]&A_T^{*-1}(\mathbb{P}(\tau_2))
\ar[r]^-{f_*}\ar@{^{(}->}[d]&A_T^*(Q\times Q)\ar@{^{(}->}[d]\\
\displaystyle\bigoplus_{y\in\OGr(1,2,Q)^T}A^{*-1}_T(y)\ar[r]&\displaystyle
\bigoplus_{u\in\mathbb{P}(\tau_2)^T}A^{*-1}_T(u)\ar[r]&\displaystyle\bigoplus_{z\in (Q\times Q)^T}A^*_T(z)}
$$

The map $\pi\colon\mathbb{P}(\tau_2)\to\OGr(1,2,Q)$ is on the fixed points $2:1$ and the points
$$\big(\la e_1\ra\le\la e_1,e_2\ra,\la e_1\ra\le\la e_1,e_2\ra\big)\text{ and }
\big(\la e_1\ra\le\la e_1,e_2\ra,\la e_2\ra\le\la e_1,e_2\ra\big)$$ map to
$\la e_1\ra\le\la e_1,e_2\ra$.

Then the map $f$ maps $$\big(\la e_1\ra\le\la e_1,e_2\ra,\la e_1\ra\le\la e_1,e_2\ra\big)
\text{ to }(\la e_1\ra,\la e_1\ra)$$ and $$\big(\la e_1\ra\le\la e_1,e_2\ra,\la e_2\ra\le\la e_1,e_2\ra\big)
\text{ to }(\la e_1\ra,\la e_2\ra).$$ So, the point $(\la e_1\ra,\la e_1\ra)\in(Q\times Q)^T$ has $2l-2$ preimages
and $(\la e_1\ra,\la e_2\ra)$ has one preimage.

By formula~\eqref{fff} applied to $f$ and $z:=(\la e_1\ra,\la e_2\ra)$ we obtain
$$\tilde f_*(a)_z=a_u\cdot(-\chi_1-\chi_2),$$
where $u=\big(\la e_1\ra\le\la e_1,e_2\ra,\la e_2\ra\le\la e_1,e_2\ra\big)$.

Applying $w\in W$ we cover all coordinates of $\tilde f_*(a)$
in the $W$-orbit of $(\la e_1\ra,\la e_2\ra)$. Note that the fixed points in the
orbit of $(\la e_1\ra,\la e_{-1}\ra)$ have no preimages under $f$. So, these coordinates
of $\tilde f_*(a)$ equal $0$.

It remains to cover coordinates in the orbit of $z:=(\la e_1\ra,\la e_1\ra)\in (Q\times Q)^T$.
We have $f^{-1}(\la e_1\ra,\la e_1\ra)$ is isomorphic to the $(2l-2)$-dimensional subquadric $Z$ of $Q$
given by the restriction of $q$ to $\la e_2,\ldots,e_l,e_{-l},\ldots,e_{-2}\ra$.
The embedding $j\colon Z\to\mathbb{P}(\tau_2)$ is given by
$j(\la v\ra)=\big(\la e_1\ra\le\la e_1,v\ra,\la e_1\ra\le\la e_1,v\ra\big)$.
By formula~\eqref{fff}
$$\tilde f_*(a)_z=\zeta_*(j^*(a)\cdot(-\chi_1-\chi_2)\cdot\prod_{i=3}^l(\chi_i-\chi_1)(-\chi_i-\chi_1)).$$
This finishes the description of the fixed points, weights and push-forwards needed to apply our method~\ref{method}.

\bibliographystyle{chicago}

\medskip

\medskip

\noindent
{\sc Victor Petrov\\
St.~Petersburg Department of Steklov Mathematical Institute, Russian Academy of Sciences,
Fontanka 27, 191023 St.~Petersburg, Russia

\medskip

\noindent
Chebyshev Laboratory, St. Petersburg State University, 14th Line 29b, Vasilyevsky Island, Saint Petersburg, Russia
}

\medskip

%\medskip

\noindent
{\sc Nikita Semenov\\
Institut f\"ur Mathematik, Johannes Gutenberg-Universit\"at
Mainz, Staudingerweg 9, D-55128, Mainz, Germany}

\noindent
{\tt semenov@uni-mainz.de}

\end{document}